     \def\section{\@startsection{section}{1}%
     \z@{.7\linespacing\@plus\linespacing}{.5\linespacing}%
     {\bfseries%\normalfont\scshape
     \centering
     }}
     \def\@secnumfont{\bfseries}
\newtheorem{theorem}{Theorem}[section]
\newtheorem{lemma}[theorem]{Lemma}
\theoremstyle{definition}
\newtheorem{definition}[theorem]{Definition}
\theoremstyle{remark}
\numberwithin{equation}{section}
\begin{document}

\title[Large Deviations for a Class of Semilinear SPDEs]{Large Deviations for a Class of Semilinear Stochastic Partial Differential Equations}

\author{Mohammud Foondun}
\address{ Mohammud Foondun: School of Mathematics, Loughborough University, Leicestershire, UK, LE11 3TU}
\email{m.i.foondun@lboro.ac.uk}

\author{Leila Setayeshgar}
\address{Leila Setayeshgar: Department of Mathematics and Computer Science, Providence College, Providence, RI 02918 , USA}
\email{lsetayes@providence.edu}
%\urladdr{http://www.math.univ.edu/$\sim$johndoe \bf(optional)}

%\author[Jane Doe]{Leila Setayeshgar*}
%\thanks{* This research is supported by the Doe Foundation}
%\address{Jane Doe: Department, University, City, State with zip code, Country}
%\email{lsetayes@math.usc.edu}

\subjclass[2000] {Primary 60H15, 60H10; Secondary 37L55}

\keywords{large deviations, stochastic partial differential equations, infinite dimensional Brownian motion.}

\begin{abstract}
We prove the large deviations principle (LDP) for the law of the solutions to a class of semilinear stochastic partial differential equations driven by multiplicative noise.  Our proof is based on the weak convergence approach and significantly improves earlier methods.
\end{abstract}

\maketitle

%\noindent $\clubsuit$ Note to author: Use 2000 Mathematics Subject Classification.
\section{Introduction }

We consider a family of semilinear stochastic partial differential equations, 
 
\begin{align}
\label{E:1}
\frac{\partial U^{\epsilon}}{\partial t}(t,x) &=\nonumber \frac{\partial^2 U^{\epsilon}}{\partial x^2}(t,x) +\sqrt{\epsilon}\sigma(t, x, U^\epsilon(t,x))\frac{\partial^2W}{\partial t \partial x}(t,x) \\& +\frac{\partial}{\partial x}g(t,x,U^{\epsilon}(t,x))+ f(t,x,U^\epsilon(t,x)),
\end{align}

\noindent with $U^{\epsilon}(t,0) = U^{\epsilon}(t,1) = 0$ for $t\in [0,T]$, and initial condition $U^{\epsilon}(0,x) = \eta(x)\in L^2([0,1])$.  $W(t,x)$ denotes the Brownian sheet \cite{BD} on a filtered a probability space, $(\Omega, {\mathcal F}, \{{\mathcal F}_t\}, P)$.  The functions $f=f(t,x,r)$,  $g=g(t,x,r)$,  $\sigma=\sigma(t,x,r)$ are Borel functions of $(t,x,r)\in \mathbb R_{+}\times[0,1]\times \mathbb R$.  Linear growth on $f$, and quadratic growth on $g$ are assumed.  Therefore, our family of semilinear equations contains, as special cases, both the stochastic Burgers' equation, and the stochastic reaction-diffusion equation. The existence and uniqueness to Eq. (\ref{E:1}) has been studied by  Gy\"ongy \cite{G} (1998), where the existence and uniqueness results of Bertini et al. \cite{BCJ} (1994), Da Prato et al. \cite{DDT} (1995), and Da Prato and Gatarek  \cite{DG} (1995), have been generalized.
Our aim is to prove the large deviation principle (LDP) for the law of the solutions to  Eq. (\ref{E:1}) by employing the weak convergence approach.  Our result generalizes the LDP for the stochastic Burgers' equation studied by Setayeshgar \cite{LS} (2014).  We state the precise statement of the large deviation principle below.

\begin{definition}[Large Deviation Principle]
  Let $I : \mathcal E \to [0,\infty]$ be a rate function on a Polish space $\mathcal E$. This means that for each $M<\infty$, the level set $\{x\in \mathcal E: I(x)\leq M\}$ is compact in $\mathcal E$.
  The sequence of random variables $\{X^\epsilon\}$ satisfies the large deviation principle on $\mathcal E$ with rate function $I$ if
  \begin{enumerate}
    \item For each closed subset $F\subset \mathcal E$,
    \[ \limsup_{\epsilon\rightarrow0} \, \epsilon \log P(X^\epsilon \in F) \leq - \inf_{x\in F} I(x). \]
    \item  For each open subset $G\subset \mathcal E$,
      \[ \liminf_{\epsilon\rightarrow0} \, \epsilon \log P(X^\epsilon \in G) \geq - \inf_{x\in G} I(x). \]  
  \end{enumerate}
\end{definition}

The Freidlin-Wentzell theory \cite{FW}, describes the {{asymptotic behavior}} of probabilities of the large deviations of the law of the solutions to a family of small noise finite dimensional SDEs, away from its law of large number limit. Here, we deal with the case where the driving Brownian motion is {{infinite dimensional}}.  In \cite{BDM},  Budhiraja et al. (2008) use certain {{variational representations}} for infinite dimensional Brownian motions \cite{BD} (originating from the work of Bou\'e and Dupuis \cite{BoD} (1998)) and demonstrate that, these representations provide a framework for proving large deviations for a variety of infinite dimensional systems, such as stochastic partial differentials equations.  One of the advantages of their method is that the technical exponential probability estimates usually used in proofs based on approximations, are no longer needed; instead, one is required to prove certain qualitative properties of the SPDE under study.

The following is the main contribution of this paper which establishes the large deviation principle for the law of the solutions to Eq. (\ref{E:1}).

\begin{theorem}[Main Theorem]\label{LDP}

\noindent The processes $\{U^{\epsilon}(t): t\in[0,T]\}$ satisfy the large deviation principle on $C\big([0,T]; L^2([0,1])\big)$ with rate function $I_{\eta}$ given by (\ref{ratefunction}).
\end{theorem}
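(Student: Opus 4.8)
The plan is to apply the weak convergence approach of Budhiraja, Dupuis and Maroulas \cite{BDM}, which reduces the LDP on the Polish space $\mathcal{E}=C\big([0,T];L^2([0,1])\big)$ to the verification of two sufficient conditions for the equivalent Laplace principle. First I would recast Eq.~(\ref{E:1}) in its mild form: writing $G_t(x,y)$ for the Dirichlet heat kernel on $[0,1]$ and integrating the $\partial_x g$ term by parts (the boundary contributions vanish since $G$ satisfies the Dirichlet condition), the solution satisfies
\begin{align*}
 U^{\epsilon}(t,x) = \int_0^1 G_t(x,y)\eta(y)\,dy &+ \sqrt{\epsilon}\int_0^t\!\!\int_0^1 G_{t-s}(x,y)\,\sigma(s,y,U^{\epsilon})\,W(dy\,ds) \\
 &- \int_0^t\!\!\int_0^1 \partial_y G_{t-s}(x,y)\,g(s,y,U^{\epsilon})\,dy\,ds \\
 &+ \int_0^t\!\!\int_0^1 G_{t-s}(x,y)\,f(s,y,U^{\epsilon})\,dy\,ds .
\end{align*}
By the well-posedness theory of Gy\"ongy \cite{G} this determines a measurable solution map $\mathcal{G}^{\epsilon}$ with $U^{\epsilon}=\mathcal{G}^{\epsilon}(\sqrt{\epsilon}\,W)$.

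Next I would introduce the control objects. Let $\mathcal{H}=L^2([0,T]\times[0,1])$ be the Cameron--Martin space of the Brownian sheet, let $S_N=\{h\in\mathcal{H}:\|h\|_{\mathcal{H}}^2\le N\}$ be endowed with the weak topology (compact and metrizable), and let $\mathcal{A}_N$ denote the predictable $\mathcal{H}$-valued controls lying in $S_N$ almost surely. For $h\in\mathcal{H}$ define the \emph{skeleton} $U^h=\mathcal{G}^0\big(\int_0^{\cdot}h\big)$ as the mild solution of the deterministic equation obtained from the display above by deleting the stochastic integral and inserting the forcing $G_{t-s}(x,y)\,\sigma(s,y,U^h)\,h(s,y)$; the rate function (\ref{ratefunction}) is then $I_\eta(\phi)=\inf\{\tfrac12\|h\|_{\mathcal{H}}^2:\phi=\mathcal{G}^0(\int_0^{\cdot}h)\}$. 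By \cite{BDM} it suffices to prove: (i) \emph{(compactness)} whenever $h_n\rightharpoonup h$ in $S_N$ one has $\mathcal{G}^0(\int_0^{\cdot}h_n)\to\mathcal{G}^0(\int_0^{\cdot}h)$ in $\mathcal{E}$; and (ii) \emph{(convergence)} whenever $h^{\epsilon}\in\mathcal{A}_N$ converge in distribution to $h$, the controlled solutions $\mathcal{G}^{\epsilon}(\sqrt{\epsilon}W+\int_0^{\cdot}h^{\epsilon})$ converge in distribution to $\mathcal{G}^0(\int_0^{\cdot}h)$ in $\mathcal{E}$.

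For (i), I would first derive a priori bounds $\sup_n\sup_{t\le T}\|U^{h_n}(t)\|_{L^2}<\infty$ uniform over $S_N$, using the smoothing of $G$ to absorb the quadratic $g$-term and a Gronwall argument for the linear $f$-term. The weak convergence $h_n\rightharpoonup h$ passes to the limit in the control integral because the linear operator $k\mapsto\int_0^t\int_0^1 G_{t-s}\,\sigma(s,y,U^{h_n})\,k\,dy\,ds$ is compact in the relevant topology, while equicontinuity in $t$ yields precompactness in $\mathcal{E}$; identifying the limit as $U^h$ then gives continuity of $\mathcal{G}^0$ on $S_N$, hence compactness of $\{\mathcal{G}^0(\int_0^{\cdot}h):h\in S_N\}$. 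For (ii), by Girsanov's theorem the controlled stochastic equation is well posed, and by the Skorokhod representation I may assume $h^{\epsilon}\to h$ almost surely in the weak topology of $S_N$. I would then estimate $U^{\epsilon,h^{\epsilon}}-U^h$ termwise: the genuinely stochastic term carries the factor $\sqrt{\epsilon}$ and vanishes once a uniform second-moment bound is in place; the control term is treated as in (i) via compactness of the associated integral operator together with the strong convergence of $U^{\epsilon,h^{\epsilon}}$; and the nonlinear terms are closed by Gronwall.

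\textbf{The main obstacle} is the quadratic growth of $g$, i.e.\ the Burgers-type nonlinearity $\partial_x g$. After integration by parts this produces the spatially singular kernel $\partial_y G_{t-s}(x,y)$, so the uniform $L^2$-bounds and the continuity estimates underlying both (i) and (ii) cannot be obtained from linear Gronwall alone; they require sharp $L^p$ space--time estimates on $\partial_y G$ and a careful treatment of the quadratic term. This is the delicate core of the argument and the precise point at which the analysis must go beyond the globally Lipschitz case.
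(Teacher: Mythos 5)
Your overall skeleton --- the variational framework of \cite{BDM}, reduction of the Laplace principle to two conditions, Girsanov's theorem for well-posedness of the controlled equation, and Skorokhod representation in the convergence step --- coincides with the paper's. The genuine gap is in the engine you propose to run inside that skeleton. You close both of your steps (i) and (ii) with ``uniform second-moment bounds'' plus Gronwall, and you defer the quadratic term to ``sharp $L^p$ space--time estimates on $\partial_y G$'' without supplying them. This cannot be deferred: since $g$ grows quadratically and, after integration by parts, sits against the singular kernel $\partial_y G_{t-s}$ (for which $\int_0^t\int_0^1 |\partial_y G_{t-s}(x,y)|^2\,dy\,ds$ is not finite), no Gronwall loop closes in $C([0,T];L^2([0,1]))$ from a locally Lipschitz estimate, and uniform moment bounds are precisely what is not available for Burgers-type nonlinearities. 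Indeed the paper never proves, nor uses, any moment bound on the controlled process.

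What the paper does instead, and what is absent from your proposal, is the following. It establishes only boundedness \emph{in probability} of $\sup_{t\le T}|V^{\beta(\epsilon),v^\epsilon}_{\eta^\epsilon}(t)|_2$, i.e.\ (\ref{bounded_probability}), and it obtains this not by estimation but by Gy\"ongy's truncation scheme: $f,g$ are replaced by globally Lipschitz truncations $f_n,g_n$, whose solutions are bounded in probability and converge to the true solution in $C([0,T];L^2([0,1]))$ in probability, uniformly in $\epsilon$ (\cite[Proposition 4.7]{G}); the bound is then transferred to the original equation by the two-term decomposition (\ref{ultimate}). Then, in place of Gronwall, tightness of the drift terms comes from Lemma \ref{Gyongy} (\cite[Lemma 3.3]{G}), which converts boundedness in probability of $\sup_t|\zeta^\epsilon(t)|_q$ with $q<\rho$ directly into tightness in $C([0,T];L^\rho([0,1]))$ of the convolutions against $G$ \emph{or} $\partial_y G$ --- this is exactly the device that absorbs the quadratic growth of $g$ (with $q=1$, $\rho=2$, using $|g(s,u)|_1\le K(1+|u|_2^2)$). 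The limits along subsequences are then identified via Prohorov/Skorokhod and the locally Lipschitz structure, and uniqueness of the limiting equation (Theorem \ref{uniqueness}) finishes (A2). A further structural difference: the paper does not prove your step (i) by a separate weak-continuity or compact-operator argument; it obtains (A1) by running the very same convergence theorem (Theorem \ref{convergence}) with $\beta\equiv 0$. In short, you located the difficulty correctly, but the tools you name for it would not succeed, and the tool that does the real work --- the truncation-plus-tightness machinery of \cite{G} --- is missing from your plan.
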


The precise definition of the rate function is deferred to section 3.  Large deviations principle for Eq. (\ref{E:1}) has been studied by C. Cardon Weber \cite{C} (1997), using the classical approach.  The proof that we offer is totally different and is based on the weak convergence approach.  In this approach one proves the Laplace principle which is equivalent to the large deviation principle for Polish space random elements \cite{DE}. 

\begin{definition}[Laplace Principle]
 The sequence $\{X^n, n\in \mathbb N\}$ on a Polish space $\mathcal E$ is said to satisfy the Laplace principle with rate function $I$ if for all bounded continuous functions mapping $\mathcal E$ into $\mathbb R$

$$
\mbox{lim}_{n\rightarrow \infty}\frac{1}{n}{\mbox{log}} E\{\mbox{exp}[-nh(X^n)]\} = -\mbox{inf}_{x\in {\mathcal E}}\{h(x) + I(x)\}.
$$

\end{definition}

In the weak convergence approach which is suitable for the evaluation of integrals appearing in the Laplace principle, the integrals are associated to a variational representation through a family of minimal cost functions.  The asymptotic behavior of these minimal cost functions are in turn determined by the weak convergence approach \cite {DE}.  Finally, we note that compared to the proof of C. Cardon Weber \cite{C} (1997), the conditions require less technicalities. For instance, the time discretizations required in proving the regularity of the skeleton are avoided, and exponential inequalities for the stochastic integral in H\"older norms are no longer needed.  These are usually the most difficult parts of large deviations analysis based on the standard approximation method.  

In our proof based on the weak convergence approach, one only needs to establish the well-posedness of the controlled process, and its convergence to the limiting equation.  This results in a shorter, and more straightforward proof than that of C. Cardon Weber \cite{C} (1997).

We now give an outline of the paper. In Section 2, we state some assumptions and give some preliminary information.  The existence and uniqueness results for the family of semilinear SPDEs is also stated in this section.  In Section 3, we state the large deviations theorem due to Budhiraja et al. (\cite[Theorem 7]{BDM}) which we exploit.  We subsequently introduce the controlled and limiting equations, and establish their existence and uniqueness. Section 4 is devoted to the proof of the main theorem.  As noted before, establishing the large deviations principle hinges on proving the tightness and convergence properties of the controlled process.  This is carried out in Theorem 4.2.  
Unless otherwise noted, we adopt the following notation throughout the paper:  The notation $`` \doteq"$ means by definition.  $C$ denotes a  {{free}} constant which may take on different values, and depend upon other parameters. We use the notation $|h|_p$ to denote the $L^p([0,1])$-norm of a function $h$ defined on $[0,1]$.

\noindent\section{Main Assumptions and Preliminaries}

\noindent In this Section we introduce some assumptions and preliminaries which are needed for the formulation of the problem.  The functions $f=f(t,x,r)$,  $g=g(t,x,r)$,  $\sigma=\sigma(t,x,r)$ are Borel functions of $(t,x,r)\in \mathbb R_{+}\times[0,1]\times \mathbb R$ and have the following assumptions:

\begin{enumerate}[(H1)]

\item [(H1)] There exists a constant $K>0$ such that for all $(t,x,r)\in[0,T]\times[0,1]\times \mathbb R$ we have  $\sup_{t\in[0,T]}\sup_{x\in[0,1]}|f(t,x,r)|\leq K(1+|r|) $.

\item  [(H2)] The function $g$ is of the $g(t,x,r)=g_1(t,x,r)+g_2(t,r),$ where $g_1$ and $g_2$ are Borel functions satisfying
\begin{align*}
|g_1(t,x,r)|\leq K(1+|r|)\quad\text{and}\quad |g_2(t,r)|\leq K(1+|r|^2).
\end{align*}

\item [(H3)] $\sigma$ is bounded and for every $T\geq 0$ there exists a constant $L$ such that  for $(t,x,p,q)\in[0,T]\times[0,1]\times {\mathbb R}^2$ we have $|\sigma(t,x,p) -\sigma(t,x,q)|\leq L|p-q|.$
Furthermore, $f$ and $g$ are locally Lipschitz with linearly growing Lipschitz constant, i.e.,
\begin{align*}
|f(t,x,p) -f(t,x,q)|&\leq L(1 +|p|+|q|)|p-q|\\
|g(t,x,p)-g(t,x,q)|&\leq L(1 +|p|+|q|)|p-q|.
\end{align*}
\end{enumerate}
\begin{definition}[Mild Solution]\label{definition} \rm{A random field $U^{\epsilon} \doteq \{U^{\epsilon}(t,x) : t\in[0,T], x\in[0,1]\}$ is called a mild solution of (\ref{E:1}) with initial condition $\eta$ if $(t,x)\rightarrow U^{\epsilon}(t,x)$ is continuous a.s., and $U^{\epsilon}(t,x)$ is $\{{\mathcal F}_t\}$-measurable for any $t\in[0,T]$, and $x\in[0,1]$, and if

\begin{align*}
\label{E:Semilinear_Solution_definition}
&U^{\epsilon}(t,x)= \int_0^1 G_t(x,y)\eta(y)dy + \sqrt{\epsilon} \int_0^t\int_0^1 G_{t-s}(x,y) \sigma(s, U(s))(y)W(dy, ds)\\& -\int_0^t\int_0^1\partial_yG_{t-s}(x,y)g(s, U(s))(y)dyds+\int_0^t\int_0^1G_{t-s}(x,y)f(s, U(s))(y)dyds.
\end{align*}}
\end{definition}

\noindent The function $G_t(.,.)$ is the Green kernel associated with the heat operator $\partial/\partial t - \partial ^2/\partial^2x$ with Dirichlet's boundary conditions. The following result of Gy\"ongy (\cite[Theorem 2.1]{G}) asserts the existence and uniqueness of a solution to (\ref{E:1}).

\begin{theorem}[Existence and Uniqueness of Solution Mapping]\label{existence}

For any filtered probability space $(\Omega, {\mathcal F}, P, \{{\mathcal F}_t\})$, with a Brownian sheet defined as before, and $\eta\in L^p[0,1]$, $p\geq 2$ there exists a measurable function

$$
\xi^{\epsilon}: L^2([0,1]) \times C([0,T] \times[0,1]; \mathbb R) \rightarrow C\big([0,T] ; L^p([0,1])\big),
$$

\noindent such that $U^{\epsilon} \doteq \xi^{\epsilon}(\eta, \sqrt{\epsilon}W)$, (with $\eta$ denoting the initial condition) is the unique, mild solution of (\ref{E:1}).

\end{theorem}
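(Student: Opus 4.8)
The plan is to realize $U^\epsilon$ as the unique fixed point of the integral operator associated with the mild formulation in Definition \ref{definition}, and then to extract measurable dependence on the data $(\eta,\sqrt\epsilon W)$ via a pathwise-uniqueness argument. First I would collect the quantitative properties of the Dirichlet heat kernel $G_t(x,y)$ on $[0,1]$: the $L^q$-to-$L^p$ smoothing bounds for convolution against $G$, and the crucial estimate controlling the singular spatial derivative, namely that $\int_0^t\int_0^1|\partial_yG_{t-s}(x,y)|^{\rho}\,dy\,ds<\infty$ for $\rho$ in a suitable range. The derivative costs a factor of order $(t-s)^{-1/2}$, which is still integrable in time, and this is exactly what makes the Burgers-type drift $-\int_0^t\int_0^1\partial_yG_{t-s}(x,y)g(s,U(s))(y)\,dy\,ds$ well defined and contractive on short time intervals.

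Because $g$ has only quadratic growth (assumption (H2)), the a priori bounds do not close linearly, so I would introduce a truncation $g_N$ that agrees with $g$ on $\{|r|\le N\}$ and is globally Lipschitz with linear growth. For each fixed $N$ the coefficients are globally Lipschitz with linear growth, and a standard Picard iteration in $C([0,T];L^p([0,1]))$ converges: the stochastic convolution $\sqrt\epsilon\int_0^t\int_0^1G_{t-s}(x,y)\sigma(s,U(s))(y)\,W(dy,ds)$ is controlled by the Burkholder--Davis--Gundy inequality together with the boundedness of $\sigma$ from (H3) and the kernel smoothing bounds, while the two drift terms are controlled by (H1) and the derivative-loss estimate above. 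This produces a unique mild solution $U^{\epsilon,N}$, and, since the iterates are explicit measurable functionals of $(\eta,\sqrt\epsilon W)$, so is the limit.

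To remove the truncation I would establish uniform-in-$N$ moment estimates of the form $\sup_N E\,\sup_{t\le T}|U^{\epsilon,N}(t)|_p^{\,p}<\infty$ using Gronwall's lemma together with the linear growth of $f$ (H1) and the splitting $g=g_1+g_2$ (H2). Introducing the stopping times $\tau_N=\inf\{t:\sup_{x\in[0,1]}|U^{\epsilon,N}(t,x)|>N\}$ and showing $\tau_N\uparrow T$ almost surely then lets me patch the localized solutions into a global one on $[0,T]$. Pathwise uniqueness follows by subtracting two mild solutions, applying the local Lipschitz bounds of (H3) on the set where both solutions are bounded, and invoking Gronwall after another localization in the sup-norm.

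Finally, the existence of the measurable solution map $\xi^\epsilon$ follows from pathwise uniqueness together with the existence of a weak mild solution through the Yamada--Watanabe principle adapted to Brownian-sheet--driven equations: pathwise uniqueness upgrades the weak solution to a strong one, which by definition is a measurable functional of the initial datum and the noise, giving the required $\xi^\epsilon(\eta,\sqrt\epsilon W)$. I expect the main obstacle to be the third step, since the quadratic term $g_2$ forces the truncation-plus-a-priori-estimate scheme and the delicate balancing of the derivative-loss kernel estimate against the available spatial integrability; by contrast the contributions of the Lipschitz term $f$ and the bounded multiplicative coefficient $\sigma$ are routine. This reproduces the scheme of Gy\"ongy \cite{G}.
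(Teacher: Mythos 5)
You should first note what you are being compared against: the paper offers \emph{no proof} of Theorem \ref{existence} at all --- it is quoted as Gy\"ongy's result (\cite[Theorem 2.1]{G}) --- so your proposal must be judged against the argument of \cite{G}, which you explicitly claim to reproduce. Your skeleton (heat-kernel estimates, truncation to globally Lipschitz coefficients, Picard iteration, a priori bound, localization, pathwise uniqueness) is indeed Gy\"ongy's scheme, and your treatment of $f$, $\sigma$ and the stochastic convolution is unproblematic. The genuine gap is at the step you yourself call the crux. You claim that $\sup_N E\sup_{t\le T}|U^{\epsilon,N}(t)|_p^p<\infty$ follows from ``Gronwall's lemma together with the linear growth of $f$ and the splitting $g=g_1+g_2$''. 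It does not. In the mild formulation the $g_2$-contribution is controlled only through
$\int_0^t \bigl|\partial_y G_{t-s}\bigr|_{L^2_y}\,|g_2(s,U(s))|_1\,ds \le C\int_0^t (t-s)^{-3/4}\bigl(1+|U(s)|_2^2\bigr)\,ds$,
which is \emph{quadratic} in the unknown, so no Gronwall argument closes --- in moments or in probability. Nor is this a technicality: if the quadratically growing part were allowed to depend on $x$, the term $\partial_x[g(t,x,u)]$ produces a source of order $u^2$ and solutions can blow up (the equation behaves like $u_t=u_{xx}+u^2$). The idea you are missing --- and the entire reason (H2) forces $g_2=g_2(t,r)$ to be independent of $x$ --- is a cancellation: in the energy estimate,
$\int_0^1 u\,\partial_x[g_2(t,u)]\,dx=-\int_0^1 \partial_x u\; g_2(t,u)\,dx=-\int_0^1 \partial_x[H(t,u)]\,dx=0$,
where $H(t,r)\doteq\int_0^r g_2(t,\rho)\,d\rho$, using the Dirichlet boundary conditions. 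Only after the quadratic part has been eliminated this way do $g_1$ and $f$ (linear growth) yield a closable estimate.

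There are two further mismatches with the actual argument. First, the a priori bound in \cite{G} --- and the form this paper itself relies on later, see (\ref{bounded_probability_approximation}) --- is \emph{boundedness in probability} of $\sup_{t\le T}|\cdot|_2$, not a moment bound on the supremum; it is obtained pathwise after subtracting the stochastic convolution (whose sup-norm is controlled in probability because $\sigma$ is bounded), not by applying It\^o's formula to $|u(t)|_2^2$, which is unavailable for space-time white noise since the trace-type correction $\sum_k\int_0^1\sigma^2 e_k^2\,dx$ diverges; accordingly the truncation is removed by convergence in probability, not in $L^p(\Omega)$. Second, your stopping time $\tau_N$ is defined through $\sup_x|U^{\epsilon,N}(t,x)|$ while your a priori estimate is in $L^p([0,1])$; since the truncation acts on the range of $r$, identifying the localized solutions before $\tau_N$ requires a pointwise (sup-norm) bound, i.e.\ an additional $L^2\to L^\infty$ bootstrap via heat-kernel smoothing, a separate nontrivial step in \cite{G} that your outline omits. (Your appeal to Yamada--Watanabe at the end is harmless but superfluous: $\sigma$ is Lipschitz, so the Picard scheme already produces a strong solution together with the measurable map $\xi^\epsilon$.)
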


We now state some estimates on the Dirichlet heat kernel. The proofs are well known and are omitted.

\subsection{Estimates on the Heat Kernel}
 
\noindent There exist positive constants $K, a, b, d$ such that for all $0\leq s<t\leq T$, and $x, y\in[0,1]$.
 
 \begin{enumerate}
 
 \item $|G(s,t;x,y)|\leq K\frac{1}{|t-s|}\exp\big (-a\frac{|x-y|^2}{t-s}\big)$,\\
 
 \item $|\frac{\partial}{\partial x} G(s,t;x,y)|\leq K\frac{1}{|t-s|^{3/2}}\exp\big (-b\frac{|x-y|^2}{t-s}\big)$,\\
 
 \item $|\frac{\partial}{\partial t} G(s,t;x,y)|\leq K\frac{1}{|t-s|^2}\exp\big (-d\frac{|x-y|^2}{t-s}\big)$.
 
 \end{enumerate}
 
\noindent For $\bar\alpha = \frac{\gamma-d}{2\gamma}$ with $\gamma\in(d, \infty)$, and any $\alpha<\bar \alpha$ there exists a constant $\bar K(\alpha)$ such that for all $0<s<t<T$, and all $x,y\in[0,1]$.

\hspace{3 pt}$(4)$ $\int_0^T\int_0^1|G_{t-\tau}-G_{s-\tau}|^2 d\eta d\tau \leq \bar K(\alpha)\rho((t,x), (s,y))^{2\alpha}$

\noindent where $\rho$ is the Euclidean distance in $[0,T]\times[0,1]$.

\section{Framework for the Uniform Laplace Principle}

In this section, we review some of the results presented in \cite{BDM}.  In particular, we state Theorem \ref{LDP_original} which asserts the uniform Laplace principle for a family of functionals of a Brownian sheet under two main assumptions. In subsection 3.1, we state the two assumptions for the class of semilinear SPDEs under study, and employ Theorem \ref{LDP_original} to show the uniform Laplace principle.

\subsection{Uniform Laplace Principle for Functionals of a Brownian Sheet.}

Let $(\Omega, {\mathcal F}, P, \{{\mathcal F}_t\})$ be the filtered probability space introduced as before, and $\psi: \Omega \times[0,T] \rightarrow L^2([0,1])$ an $L^2([0,1])$-valued predictable process.  Let ${\mathcal E}_0$ and ${\mathcal E}$ be Polish spaces, and let the initial condition $\eta$ take values in a compact subspace of ${\mathcal E}_0$.  Moreover, for every  
 $\varepsilon>0$, let  $\xi^{\varepsilon} : {\mathcal E}_0 \times {\mathcal C}([0,T]\times [0,1]; \mathbb R) \rightarrow {\mathcal E}$ be a family of measurable maps. Define $X^{\varepsilon, \eta} \doteq \xi^{\varepsilon} (\eta, \sqrt{\varepsilon }W)$, and introduce the following:

 \begin{equation}
 S^N \doteq \left\{\psi\in L^2([0,T]\times[0,1]): \int_{[0,T]\times[0,1]} \psi^2(s,y)ds dy \leq N\right\},  ~~~~ N\in \mathbb N, 
 \end{equation}
 
  \noindent $S^N$ is a compact metric space, equipped with the weak topology on $L^2([0,T]\times[0,1])$. For $v \in L^2([0,T]\times [0,1])$, define
 
 \begin{equation}
{\mathcal P}_2  \doteq \left\{ \psi : \int_0^T |\psi(s)|^2_{2} ds < \infty ~~a.s.\right\}.
\end{equation}
 \begin{equation}
{\mathcal P}^N_2 \doteq \left\{v\in   {\mathcal P}_2 : v(\omega) \in S^N, P-a.s. \right\}.
 \end{equation}
 
\noindent ${\mathcal P}_2$  is the space of controls.  By Girsonov's theorem the process
\begin{align*} 
  \widehat{W}(t) = W(t) + {\epsilon}^{-1/2} \int_0^t v(s) ds,
\end{align*}

\noindent is a cylindrical Wiener process under the measure $Q^{v,\epsilon}$ defined by

\begin{align*} \label{E:Girsanov change measure}
\frac{dQ^{v,\epsilon}}{dP}\doteq \exp\left\{-\frac{1}{\sqrt{\epsilon}}\int_0^T\int_0^1 v(s,y)W(dy ds) -\frac{1}{2\epsilon}\int_0^T\int_0^1v^2(s,y)dyds\right\}. 
\end{align*}

\noindent For convenience, define
 
 \begin{equation}
 {\mbox{Int}}(v)(t,x) \doteq \int_0^t\int_0^x v(s,y) ds dy.
 \end{equation}
 
\noindent The following condition is the standing assumption of Theorem \ref{LDP_original} which states the uniform Laplace principle for the family $\{X^{\varepsilon, \eta}\}.$
 
\noindent{\underline{CONDITION}}: There exists a measurable map $\xi^0: {\mathcal E}_0 \times {\mathcal C}([0,T]\times[0,1];\mathbb R) \rightarrow \mathbb {\mathcal E}$ such that
 
 \begin{enumerate}
 
 \item [(A1)] For every $M<\infty$ and compact set $K \subset {\mathcal E}_0 $, the set
 
 $$
 \Gamma_{M,K} \doteq \{\xi^0(\eta, \mbox{Int}(v)): v \in S^M, \eta\in K\},
 $$
 
 \noindent is a compact subset of ${\mathcal E}$.
 
\item [(A2)]  Consider $M<\infty$ and the families $\{v^{\epsilon}\} \subset {\mathcal P}^M_2$, and $\{\eta^{\epsilon}\} \subset {\mathcal E}_0$ such that $v^{\epsilon}\rightarrow v$, and $\eta^{\epsilon} \rightarrow \eta$ in distribution, as $\varepsilon \rightarrow 0$. Then

$$
\xi^{\varepsilon}\big(\eta^{\epsilon}, \sqrt{\varepsilon} W + {\mbox{Int}}(v^{\varepsilon})\big) \rightarrow \xi^0\big(\eta, \mbox{Int}(v)\big), 
$$
\noindent in distribution as $\epsilon \rightarrow 0$.
\end{enumerate}

\noindent For $h\in{ \mathcal E}$, and $\eta \in {\mathcal E}_0$, define the rate function

\begin{equation}
I_\eta(h) \doteq \inf_{\{v\in L^2([0,T]\times[0,1]) : h\doteq \xi^0(\eta, {Int}(v))\}}\left\{\frac{1}{2}\int_0^T\int_0^1 v^2(y,s) dy ds\right\}.
\end{equation}

\noindent The following theorem is due to Budhiraja et al. (\cite{BDM}, Theorem 7), and states the uniform Laplace principle for the family $\{X^{\varepsilon, \eta}\}$.

\begin{theorem}\label{LDP_original}Let $\xi^0: {\mathcal E}_0 \times {\mathcal C}([0,T]\times[0,1];\mathbb R) \rightarrow \mathbb {\mathcal E}$ be a measurable map satisfying conditions (A1) and (A2).  Suppose that for all $h\in{\mathcal E}$, $\eta\rightarrow I_{\eta}(h)$ is a lower semi-continuous map from ${\mathcal E}_0$ to $[0, \infty]$. Then for every $\eta \in {\mathcal E}_0 $, $I_{\eta}(h): {\mathcal E}\rightarrow [0,\infty]$, is a rate function on ${\mathcal E}$ and the family $\{I_{\eta}, \eta\in {\mathcal E}\}$ of rate functions has compact level sets on compacts.  Furthermore, the family $\{X^{\varepsilon, \eta}\}$ satisfies the the uniform Laplace principle on ${\mathcal E}$  with rate function $I_{\eta}$, uniformly in $\eta$ on compact subsets of ${\mathcal E}_0$.
\end{theorem}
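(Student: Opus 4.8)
The plan is to deduce the statement from the Boué--Dupuis variational representation for functionals of the Brownian sheet, following the weak convergence scheme of \cite{BDM}. The starting point, which I would take from that theory, is the identity: for every bounded continuous $h$ on $\mathcal E$, every $\epsilon>0$, and every $\eta\in\mathcal E_0$,
\begin{equation}
-\epsilon \log E\left[\exp\left(-\frac{h(X^{\epsilon,\eta})}{\epsilon}\right)\right] = \inf_{v\in\mathcal P_2} E\left[\frac12\int_0^T |v(s)|_2^2\,ds + h\big(\xi^\epsilon(\eta,\sqrt\epsilon W + \mathrm{Int}(v))\big)\right],
\end{equation}
obtained after rescaling the control by $\sqrt\epsilon$. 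This converts the Laplace functional into a stochastic control problem whose cost is exactly the energy $\tfrac12\int_0^T|v(s)|_2^2\,ds$ plus the value of $h$ at the associated controlled process, and the whole argument reduces to matching the asymptotics of this control problem with $\inf_{g\in\mathcal E}\{h(g)+I_\eta(g)\}$.

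First I would establish that $I_\eta$ is a good rate function. Using the weak compactness of $S^{2M}$ in $L^2([0,T]\times[0,1])$, I would show that the level set $\{g : I_\eta(g)\le M\}$ coincides with the image $\Gamma_{2M,\{\eta\}}=\{\xi^0(\eta,\mathrm{Int}(v)):v\in S^{2M}\}$, which is compact by (A1); weak lower semicontinuity of $v\mapsto\int_0^T|v(s)|_2^2\,ds$ combined with the compactness of this image gives closedness, hence compactness, of the level set. The assertion that $\{I_\eta\}$ has compact level sets on compacts, and the uniformity in $\eta$, would then follow by invoking the hypothesis that $\eta\mapsto I_\eta(g)$ is lower semicontinuous together with the compactness of $K\subset\mathcal E_0$.

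Next, the Laplace principle splits into two bounds. For the upper bound I would fix $\delta>0$ and a deterministic control $v_\delta$ with $\tfrac12\int_0^T|v_\delta(s)|_2^2\,ds + h(\xi^0(\eta,\mathrm{Int}(v_\delta)))\le \inf_g\{h(g)+I_\eta(g)\}+\delta$; inserting $v\equiv v_\delta$ into (1), condition (A2) gives $\xi^\epsilon(\eta,\sqrt\epsilon W+\mathrm{Int}(v_\delta))\to\xi^0(\eta,\mathrm{Int}(v_\delta))$ in distribution, and boundedness and continuity of $h$ let me pass to the limit, so $\limsup_{\epsilon\to0}$ of the left-hand side of (1) is $\le\inf_g\{h(g)+I_\eta(g)\}+\delta$. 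For the lower bound I would pick near-optimal \emph{random} controls $v^\epsilon$; since $h$ is bounded the optimal energy is a priori bounded, so $v^\epsilon$ may be taken in $\mathcal P_2^N$ for a fixed $N$, hence tight in the weakly compact $S^N$. Passing to a subsequence, $(v^\epsilon,\xi^\epsilon(\eta,\sqrt\epsilon W+\mathrm{Int}(v^\epsilon)))$ converges in distribution, (A2) identifies the limit as $\xi^0(\eta,\mathrm{Int}(v))$, and weak lower semicontinuity of the energy together with Fatou yields $\liminf_{\epsilon\to0}$ of the left-hand side $\ge\inf_g\{h(g)+I_\eta(g)\}$.

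The main obstacle is the lower bound, where one must simultaneously control the random near-optimal controls and the controlled states. The delicate points are: (i) justifying that the energy bound forces $v^\epsilon$ into a fixed compact $S^N$; (ii) handling the joint convergence in distribution of $(v^\epsilon,\xi^\epsilon(\cdots))$ on $S^N\times\mathcal E$ via a Skorokhod representation so that the weak lower semicontinuity of $v\mapsto\int_0^T|v(s)|_2^2\,ds$ can be applied pathwise; and (iii) threading the uniformity in $\eta$ through both bounds, which requires the estimates to hold as $\eta$ ranges over a compact subset of $\mathcal E_0$ — this is exactly where the lower semicontinuity hypothesis on $\eta\mapsto I_\eta(h)$ and the compact-set versions of (A1)--(A2) are essential.
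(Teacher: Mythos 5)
The first thing to say is that the paper does not prove this theorem at all: it is imported verbatim from Budhiraja, Dupuis and Maroulas (\cite{BDM}, Theorem 7), and the paper's actual contribution consists of verifying hypotheses (A1)--(A2) for the semilinear SPDE. So your proposal cannot be matched against an argument inside the paper; what it reconstructs is, in outline, the proof that lives in \cite{BDM} itself (building on the variational representation of \cite{BoD} and \cite{BD}): the representation for $-\epsilon\log E\exp(-h(X^{\epsilon,\eta})/\epsilon)$ as a stochastic control problem, the Laplace upper bound via $\delta$-optimal \emph{deterministic} controls combined with (A2), and the lower bound via tightness of $\delta$-optimal \emph{random} controls in $S^N$, joint weak convergence, and weak lower semicontinuity of the energy. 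That is the correct and standard route, and your list of delicate points is exactly where the real work in \cite{BDM} sits.

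Two steps in your sketch are, however, not yet proofs. First, your claim that $\{g:I_\eta(g)\le M\}$ ``coincides with'' $\Gamma_{2M,\{\eta\}}$ presupposes that the infimum defining $I_\eta$ is attained; attainment needs continuity of $v\mapsto\xi^0(\eta,\mathrm{Int}(v))$ from $S^{2M}$ (weak topology) into $\mathcal E$, which does not follow verbatim from (A2) as stated, since its hypothesis couples $v^\epsilon\to v$ with $\epsilon\to 0$ (it can be extracted by a diagonal argument, but that is an extra step). The cleaner route avoids attainment altogether: one checks directly that
\begin{equation*}
\{g\in\mathcal E : I_\eta(g)\le M\}=\bigcap_{\delta>0}\Gamma_{2(M+\delta),\{\eta\}},
\end{equation*}
a decreasing intersection of sets compact by (A1), hence compact. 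Second, in the lower bound, boundedness of $h$ gives only the \emph{expectation} bound $E\bigl[\tfrac12\int_0^T|v^\epsilon(s)|_2^2\,ds\bigr]\le 2\|h\|_\infty+\delta$ for $\delta$-optimal controls, whereas membership in $\mathcal P_2^N$ is a pathwise constraint; passing from the former to the latter requires the stopping-time truncation of \cite{BDM} (replace $v^\epsilon$ by $v^\epsilon\mathbf{1}_{[0,\tau_N]}$, with $\tau_N$ the time the accumulated energy reaches $N$, and bound the change in the cost by $2\|h\|_\infty P(\tau_N<T)$, which is small for large $N$ by Chebyshev). You flag this as delicate point (i), correctly, but for the argument to close the truncation must actually be carried out, and the Fatou/lower-semicontinuity step then applied to the truncated controls.
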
 

\subsection{The Controlled and Limiting Equations for the Semilinear SPDE} In the context of the semilinear SPDE under study, $\mathcal {E}_0 = L^{2}([0,1])$ is the space of the initial condition, and $\mathcal{E} = C([0,T];L^2([0,1]))$, the space of the solutions.  The solution map of Eq. (\ref{E:1}) is $U^\epsilon=\xi^\epsilon(\eta, \sqrt\epsilon W)$.  $V^{\epsilon,v}_{\eta} = \xi^\epsilon(\eta, \sqrt\epsilon W+ \mbox{Int}(v))$ is the solution map of the stochastic controlled equation for the semilinear SPDE,

\begin{align}
\label{controlled}
\frac{\partial V^{\epsilon}}{\partial t}(t,x) &=\nonumber \frac{\partial^2 V^{\epsilon}}{\partial x^2}(t,x) +\sqrt{\epsilon}\sigma(t, x, V^\epsilon(t,x))\frac{\partial^2W}{\partial t \partial x}(t,x)  +\frac{\partial}{\partial x}g(t,x,V^{\epsilon}(t,x)) \\&+ f(t,x,V^\epsilon(t,x)) + \sigma(t, x, V^\epsilon(t,x)) v(t,x),
\end{align}
whose mild solution is
\begin{align}
\label{E:controlled_process}
V^{\epsilon,v}_{\eta}(t,x)\nonumber &= \int_0^1 G_t(x,y)\eta(y)dy + \sqrt{\epsilon} \int_0^t\int_0^1 G_{t-s}(x,y) \sigma(s,V^{\epsilon,v}_{\eta} (s))(y)W(dy, ds)\\&\nonumber -\int_0^t\int_0^1\partial_yG_{t-s}(x,y)g(s, V^{\epsilon,v}_{\eta}(s))(y)dyds\\&\nonumber+\int_0^t\int_0^1G_{t-s}(x,y)f(s,V^{\epsilon,v}_{\eta} (s))(y)dyds\\&+\int_0^t\int_0^1G_{t-s}(x,y)\sigma(s,V^{\epsilon,v}_{\eta} (s))(y)v(s,y)dyds,
\end{align}

\noindent We refer to Eq. (\ref{E:controlled_process}) as the controlled process (i.e. the equation under the change of measure).  The map $V^{0,v}_{\eta} = \xi^0(\eta, \mbox{Int}(v)$ is the solution map of the limiting zero-noise equation, whose mild solution is  
\begin{align}
\label{E:limiting_process}
V^{0,v}_{\eta}(t,x)\nonumber &= \int_0^1 G_t(x,y)\eta(y)dy +\int_0^t\int_0^1G_{t-s}(x,y)\sigma(s,V^{0,v}_{\eta} (s))(y)v(s,y)dyds\\&\nonumber+\int_0^t\int_0^1G_{t-s}(x,y)f(s,V^{0,v}_{\eta} (s))(y)dyds\\&\nonumber-\int_0^t\int_0^1\partial_yG_{t-s}(x,y)g(s, V^{0,v}_{\eta}(s))(y)dyds.
\end{align}

\noindent We have the following existence and uniqueness result for the controlled process (\ref{E:controlled_process}), where the main ingredient of the proof is Girsonov's theorem.

\begin{theorem}[Existence and Uniqueness of Controlled Process]\label{existence_controlled} Let $\xi^{\epsilon}$ denote the solution mapping, and let $v\in {\mathcal P}^N_2$ for some $N\in {\mathbb N}$.  For $\epsilon>0$ and $\eta \in L^2([0,1])$ define
$$
V^{\epsilon, v}_{\xi} \doteq \xi^{\epsilon}\big(\eta, \sqrt{\epsilon}W+ \mbox{Int}(v)\big),
$$
then $V^{\epsilon, v}_{\eta}$ is the unique solution of equation (\ref{E:controlled_process}).

\end{theorem}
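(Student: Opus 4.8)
The plan is to realize the controlled equation (\ref{E:controlled_process}) as the \emph{original} equation (\ref{E:1}) driven by a shifted Brownian sheet, and then to invoke the existence and uniqueness of the solution map $\xi^{\epsilon}$ supplied by Theorem \ref{existence}. The shift of the driving noise is effected by Girsanov's theorem, so the whole proof reduces to bookkeeping under the change of measure together with a single application of Theorem \ref{existence}.

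First I would check that the Radon--Nikodym density displayed in Section 3 genuinely defines a probability measure $Q^{v,\epsilon}$ equivalent to $P$. Because $v\in{\mathcal P}^N_2$, the control obeys the deterministic bound $\int_0^T|v(s)|_2^2\,ds\leq N$ almost surely. Hence the quadratic variation of the relevant exponential martingale is bounded by $N/\epsilon$, so Novikov's condition holds and the exponential is a true martingale of total mass one; since the density is strictly positive, $Q^{v,\epsilon}$ is mutually absolutely continuous with $P$. Under $Q^{v,\epsilon}$ the process $\widehat{W}(t)=W(t)+\epsilon^{-1/2}\int_0^t v(s)\,ds$ is again a Brownian sheet.

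Next I would apply Theorem \ref{existence} on the filtered space $(\Omega,{\mathcal F},Q^{v,\epsilon},\{{\mathcal F}_t\})$ with $\widehat{W}$ as the driving noise, which is legitimate because Theorem \ref{existence} is stated for a Brownian sheet on an arbitrary filtered probability space. Since $\sqrt{\epsilon}\,\widehat{W}=\sqrt{\epsilon}\,W+{\mbox{Int}}(v)$, the resulting unique mild solution is precisely $\xi^{\epsilon}(\eta,\sqrt{\epsilon}\,\widehat{W})=\xi^{\epsilon}(\eta,\sqrt{\epsilon}\,W+{\mbox{Int}}(v))=V^{\epsilon,v}_{\eta}$, and it satisfies the mild formulation of (\ref{E:1}) with $W$ replaced by $\widehat{W}$. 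Decomposing the stochastic integral against $\widehat{W}$,
\[
\sqrt{\epsilon}\int_0^t\!\!\int_0^1 G_{t-s}\,\sigma(s,V^{\epsilon,v}_\eta(s))\,\widehat{W}(dy\,ds)=\sqrt{\epsilon}\int_0^t\!\!\int_0^1 G_{t-s}\,\sigma(s,V^{\epsilon,v}_\eta(s))\, W(dy\,ds)+\int_0^t\!\!\int_0^1 G_{t-s}\,\sigma(s,V^{\epsilon,v}_\eta(s))\,v\,dy\,ds,
\]
returns exactly the five terms of (\ref{E:controlled_process}); the extra drift $\sigma v$ is integrable since $\sigma$ is bounded by (H3) and $v\in S^N$. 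As $P$ and $Q^{v,\epsilon}$ are equivalent, this identity, valid $Q^{v,\epsilon}$-a.s., holds $P$-a.s. as well, so $V^{\epsilon,v}_\eta$ is a mild solution of (\ref{E:controlled_process}).

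For uniqueness I would run the same argument in reverse: any mild solution $\widetilde{V}$ of (\ref{E:controlled_process}) under $P$ solves, under $Q^{v,\epsilon}$, the original equation (\ref{E:1}) driven by $\widehat{W}$, whence the uniqueness clause of Theorem \ref{existence} forces $\widetilde{V}=V^{\epsilon,v}_\eta$ $Q^{v,\epsilon}$-a.s., and therefore $P$-a.s. by equivalence of the measures. The main obstacle is the careful handling of the stochastic integral under the measure change: one must verify that the Walsh integral against the Brownian sheet is preserved, up to the deterministic drift correction $\int\int G_{t-s}\sigma v\,dy\,ds$, when passing between $P$ and $Q^{v,\epsilon}$, and that the a.s.\ identities transfer between the two equivalent measures without disturbing the predictability and continuity requirements of Definition \ref{definition}.
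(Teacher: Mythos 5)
Your proposal is correct and follows essentially the same route as the paper: both realize the controlled equation as the original equation (\ref{E:1}) driven by the shifted noise $\widehat{W}=W+\epsilon^{-1/2}\,\mbox{Int}(v)$, justify the shift via Girsanov's theorem under the measure $Q^{v,\epsilon}$, invoke Theorem \ref{existence} on the new filtered space, and transfer existence and uniqueness back to $P$ by equivalence of the measures. Your version merely fills in details the paper leaves implicit, such as the Novikov bound $N/\epsilon$ coming from $v\in{\mathcal P}^N_2$ and the explicit decomposition of the Walsh integral against $\widehat{W}$ into the $W$-integral plus the drift term.
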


\begin{proof} For a fixed $v\in {\mathcal P}^N_2$, define 

$$
\frac{dQ^{v,\epsilon}}{dP}\doteq \exp\left\{-\frac{1}{\sqrt{\epsilon}}\int_0^T\int_0^1 v(s,y)W(dy ds) -\frac{1}{2\epsilon}\int_0^T\int_0^1v^2(s,y)dyds\right\}. 
$$

\noindent Since

$$
 \exp\left\{-\frac{1}{\sqrt{\epsilon}}\int_0^T\int_0^1 v(s,y)W(dy ds) -\frac{1}{2\epsilon}\int_0^T\int_0^1v^2(s,y)dyds\right\}, 
$$

\noindent is an exponential martingale, we have that $Q^{v,\epsilon}$ is a probability measure on $(\Omega, {\mathcal F}, P, \{{\mathcal F}_t\})$.  Obviously, $Q^{v,\epsilon}$ is equivalent to $P$.  By Girsanov's theorem (\cite{DZ}, Theorem 10.14 ), $\widehat W \doteq W+{\epsilon}^{-1/2}{\mbox{Int}}(u)$ is a Brownian sheet under $Q^{v,\epsilon}$.  By Theorem \ref{existence},  $V^{\epsilon, v}_{\xi}$ is the unique solution of (\ref{E:1}) with $\widehat W$ replaced by $W$ under the measure $Q^{v,\epsilon}$.  This is precisely equation (\ref{E:controlled_process}) on $(\Omega, {\mathcal F},Q^{\epsilon, v}, \{{\mathcal F}_t\})$.  By the equivalence of the measures, $V^{\epsilon, v}_{\eta}$ is the unique solution of Eq. (\ref{E:controlled_process}) under the measure $P$, and the proof is complete.
\end{proof}

\noindent For $h\in C\big([0,T]; L^2([0,1])\big)$, we define the following action functional

\begin{equation}\label{ratefunction}
I_\eta(h) \doteq \inf_v\int_0^T\int_0^1 v^2(s,y) dy ds,
\end{equation}

\noindent where the infimum is taken over all $v\in L^2([0,T]\times [0,1])$ such that

\begin{align}
\label{E:deterministic}
h&(t,x)\nonumber = \int_0^1 G_t(x,y)\eta(y)dy -\int_0^t\int_0^1\partial_yG_{t-s}(x,y)g(s, h(s))(y)dyds\\
&+\int_0^t\int_0^1G_{t-s}(x,y)f(s, h(s))(y)dyds+\int_0^t\int_0^1 G_{t-s}(x,y) \sigma(s,h(s)) v(s,y) dy ds.
\end{align}

\noindent The next Theorem asserts the existence and uniqueness of the limiting equation which we will use in the proof of Theorem \ref{convergence}.

\begin{theorem}[Existence and Uniqueness of Limiting Eqn]\label{uniqueness}Fix $\eta\in L^2([0,1])$ and $v \in L^2([0,T]\times[0,1])$.  Then there exists a unique function $h\in \mathcal C\big([0,T];L^2([0,1])\big)$ which satisfies equation (\ref{E:deterministic}).
\end{theorem}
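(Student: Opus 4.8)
The plan is to realise $h$ as the unique fixed point of the map $\mathcal{T}$ that sends $w\in C\big([0,T];L^2([0,1])\big)$ to the right-hand side of (\ref{E:deterministic}) with $h$ replaced by $w$. First I would verify that $\mathcal{T}$ maps $C\big([0,T];L^2([0,1])\big)$ into itself. The contraction property of the Dirichlet heat semigroup on $L^2$, namely $|G_r\phi|_2\le|\phi|_2$, together with (H1), controls the free term $G_t\eta$ and the drift term by $\int_0^t K(1+|w(s)|_2)\,ds$; since $\sigma$ is bounded by (H3) and $v\in L^2([0,T]\times[0,1])$, the control term is bounded by $C\,T^{1/2}\big(\int_0^T|v(s)|_2^2\,ds\big)^{1/2}$. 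Continuity in $t$ of each term then follows from strong continuity of the semigroup together with dominated convergence.

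The delicate contribution is the one carrying the spatial derivative $\partial_y G_{t-s}$ together with the quadratically growing part of $g$. Writing $g=g_1+g_2$ as in (H2), the linearly growing piece $g_1$ is treated like the $f$-term, while $g_2$ only satisfies $|g_2(s,w(s,y))|\le K(1+|w(s,y)|^2)$, so that $g_2(s,w(s))$ lies merely in $L^1([0,1])$. The key technical point is that a refinement of the bound on $\partial_yG$ from Section~2 yields $\big|\int_0^1\partial_y G_r(\cdot,y)\phi(y)\,dy\big|_2\le C\,r^{-3/4}|\phi|_1$, whose singularity is integrable in time; with it,
\[
\Big|\int_0^t\int_0^1\partial_yG_{t-s}(x,y)\,g_2(s,w(s))(y)\,dy\,ds\Big|_2\le C\int_0^t(t-s)^{-3/4}\big(1+|w(s)|_2^2\big)\,ds,
\]
which is finite and continuous in $t$. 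This establishes that $\mathcal{T}$ is well defined on $C\big([0,T];L^2([0,1])\big)$.

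For uniqueness I would estimate the difference of two solutions $h_1,h_2$. By (H3) the nonlinearities are locally Lipschitz with constant $\le L(1+|p|+|q|)$, so, using the same kernel bounds together with the fact that any path in $C\big([0,T];L^2([0,1])\big)$ is automatically bounded, $w\doteq h_1-h_2$ obeys an inequality of the form
\[
|w(t)|_2\le C\int_0^t(t-s)^{-3/4}\big(1+|h_1(s)|_2+|h_2(s)|_2\big)\,|w(s)|_2\,ds ,
\]
and a singular (generalised) Gronwall lemma forces $w\equiv0$. The same difference estimate, restricted to a short interval on which a suitable ball of $C\big([0,T_0];L^2([0,1])\big)$ is invariant, shows that $\mathcal{T}$ is a contraction there, yielding a unique local solution.

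The remaining, and hardest, step is to continue the local solution to all of $[0,T]$, which requires an a priori bound $\sup_{t\le T}|h(t)|_2<\infty$ not coming from a contraction, since a crude application of Gronwall to the quadratic term permits finite-time blow-up. The resolution I would use rests on the conservation structure preserved by the Dirichlet boundary conditions: testing the equation against $h$ (after a Galerkin or mollification regularisation) annihilates the leading quadratic contribution, because $\int_0^1\partial_x\Phi(s,h)\,dx=0$ where $\partial_r\Phi=g_2$ and $h$ vanishes at the endpoints, while the linearly growing terms $g_1,f$ and the control $\sigma v$ are absorbed using the parabolic dissipation. This produces a linear integral inequality for $|h(t)|_2^2$, whence Gronwall gives a bound depending only on $|\eta|_2$, $\big(\int_0^T|v(s)|_2^2\,ds\big)^{1/2}$ and $T$, and the local solution continues to $[0,T]$. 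The main obstacle I anticipate is justifying this energy cancellation rigorously at the level of mild $L^2$-solutions, i.e.\ closing the regularity gap between the mild formulation and the manipulations of the energy method.
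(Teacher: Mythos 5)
Your proposal is correct and takes essentially the same route as the paper, which omits the proof of Theorem \ref{uniqueness} and defers to the argument for Theorem \ref{existence}, i.e.\ Gy\"ongy's scheme in \cite{G}: your sharpened kernel bound (the $r^{-3/4}$ smoothing from $L^1$ into $L^2$, which indeed requires $|\partial_yG_r|\leq Kr^{-1}\exp(-b|x-y|^2/r)$ rather than the cruder estimate stated in Section 2), the singular Gronwall uniqueness, and the energy cancellation $\int_0^1\partial_x\Phi(s,h)\,dx=0$ coming from the $x$-independence of $g_2$ and the Dirichlet boundary conditions are exactly the ingredients of that proof. The regularity gap you flag at the end is closed there by running the energy estimate on the truncated, globally Lipschitz approximations $f_n, g_n$ and passing to the limit, so your outline is complete in substance.
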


\noindent The proof of this Theorem is very similar to that of Theorem \ref{existence}, and thus omitted.  We now state  two Theorems and two Lemmas which we are going to use in the proof of the main Theorem.  The next Lemma (\cite[Lemma 3.3]{G}) is used in proving the tightness of the second and third terms of the controlled process (\ref{E:controlled_process_perturbed}).

\begin{lemma}\label{Gyongy}
Let $\rho\in[1, \infty)$, and $q\in[1,\rho)$. Moreover, let $\zeta_n(t,y)$ be a sequence of random fields on $[0,T]\times[0,1]$ such that $\sup_{t\leq T}|\zeta_n(t,.)|_q \leq \theta_n$, where $\theta_n$ is a finite random variable for every $n$.  Assume that the sequence $\theta_n$ is bounded in probability, i.e.
$$
\lim_{c\rightarrow\infty}\sup_nP(\theta_n\geq C) =0.
$$
Then the sequence $J(\zeta_n) \doteq\int_0^t\int_0^1 R(r,t;x,y)\zeta_n(r,y)dy dr$, $t\in[0,T]$, $x\in[0,1]$ where $R(r,t;x,y) = \partial_y G(r,t;x,y)$ or $R(r,t;x,y) = G(r,t;x,y)$ is uniformly
 tight in $C\big([0,T]; L^\rho([0,1])\big)$.

\end{lemma}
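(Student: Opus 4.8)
The plan is to establish uniform tightness of the sequence $\{J(\zeta_n)\}$ in $C\big([0,T];L^\rho([0,1])\big)$ by verifying an equicontinuity-type criterion together with a uniform bound, while exploiting the convolution structure of the operator $R$. First I would use the boundedness in probability of $\theta_n$ to reduce the problem to a deterministic one: on the event $\{\theta_n\leq C\}$, the bound $\sup_{t\leq T}|\zeta_n(t,\cdot)|_q\leq C$ holds, and since $P(\theta_n\geq C)\to 0$ uniformly in $n$ as $C\to\infty$, it suffices to show that for each fixed $C$ the family $J(\zeta_n)$ is tight when restricted to this event. Thus I would work with the class of integrands satisfying a uniform $L^q$ bound and prove a compactness statement for the image under $J$.

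The key analytic step is to obtain, uniformly over integrands $\zeta$ with $\sup_{t\leq T}|\zeta(t,\cdot)|_q\leq C$, a modulus-of-continuity estimate in $C\big([0,T];L^\rho\big)$. Concretely, I would estimate $|J(\zeta)(t,\cdot)-J(\zeta)(s,\cdot)|_\rho$ for $s<t$ by splitting the time integral into the region $[0,s]$, where one compares the two kernels $R(r,t;\cdot,\cdot)$ and $R(r,s;\cdot,\cdot)$, and the region $[s,t]$, where only one kernel appears. On each piece I would apply Young's convolution inequality together with the heat-kernel bounds listed in Section 2, in particular bounds $(1)$ and $(2)$ and the difference estimate $(4)$, to control the $L^\rho$ norm in terms of the $L^q$ norm of $\zeta$ and an appropriate power of $t-s$. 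The condition $q\in[1,\rho)$ is exactly what makes the relevant convolution kernel $L^p$-integrable in time for the conjugate exponent dictated by Young's inequality, yielding a bound of the form $C\,|t-s|^{\kappa}$ for some $\kappa>0$. This furnishes uniform equicontinuity in time, and the uniform bound $\sup_{t\leq T}|J(\zeta)(t,\cdot)|_\rho\leq C'$ follows from the same Young-type estimate taken at a single time.

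Having established a uniform bound and a uniform modulus of continuity in $C\big([0,T];L^\rho\big)$, I would invoke a version of the Arzel\`a--Ascoli theorem for functions valued in $L^\rho([0,1])$: equicontinuity in time plus pointwise relative compactness in $L^\rho$ gives relative compactness in $C\big([0,T];L^\rho\big)$. The pointwise-in-time relative compactness in the infinite-dimensional space $L^\rho$ requires an additional argument, since a uniform $L^\rho$ bound alone does not give compactness there; I would obtain it from the smoothing of the heat semigroup, showing that $J(\zeta)(t,\cdot)$ lies in a bounded set of a fractional Sobolev space (or satisfies a uniform spatial modulus of continuity) via kernel bound $(2)$, which embeds compactly into $L^\rho$. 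Translating these deterministic compactness statements back through the event $\{\theta_n\leq C\}$ and letting $C\to\infty$ then yields uniform tightness of $\{J(\zeta_n)\}$.

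The main obstacle I anticipate is the spatial compactness at fixed time: the temporal equicontinuity is a fairly direct consequence of the kernel difference estimate $(4)$, but guaranteeing relative compactness of $\{J(\zeta_n)(t,\cdot)\}$ in $L^\rho([0,1])$ uniformly in $t$ and $n$ requires a genuine spatial regularity gain from the kernel, and one must check carefully that the gradient bound $(2)$ combined with $q<\rho$ produces a uniform spatial modulus of continuity strong enough to invoke a compact Sobolev embedding. Keeping all constants uniform in $n$ (rather than merely finite for each fixed $n$) throughout these convolution estimates is the delicate bookkeeping on which the whole argument rests.
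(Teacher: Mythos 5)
The first thing to note is that the paper never proves this lemma, so there is no internal proof to compare against: it is imported verbatim from Gy\"ongy \cite[Lemma 3.3]{G} and used as a black box. Measured against the proof in that reference, your outline is essentially the same argument: reduce, via $\sup_n P(\theta_n\ge C)\to 0$, to showing that $J$ maps the deterministic ball $\{\zeta:\sup_{t\le T}|\zeta(t,\cdot)|_q\le C\}$ into a relatively compact subset of $C([0,T];L^\rho([0,1]))$; get a uniform bound and a temporal modulus in $L^\rho$ from Young's inequality and Gaussian kernel estimates; get spatial compactness at fixed $t$ from the smoothing of the kernel (a translation estimate plus Fr\'echet--Kolmogorov, or a fractional Sobolev bound plus compact embedding); conclude with a Banach-space Arzel\`a--Ascoli theorem, and then convert compactness of the deterministic image into tightness. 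The strategy and the order of reductions are correct.

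Two technical points in your sketch need repair. First, the kernel estimates (1)--(2) as listed in Section 2 of this paper are too crude to close your Young-inequality computation for $R=\partial_yG$: with $|\partial_yG_s(x,y)|\le Ks^{-3/2}e^{-b|x-y|^2/s}$ one gets $|\partial_yG_s(x,\cdot)|_{L^m}\le Cs^{-3/2+1/(2m)}$, and $\int_0^t (t-r)^{-3/2+1/(2m)}\,dr=\infty$ for every $m\ge 1$. You must instead use the sharp scaling $|\partial_yG_s(x,y)|\le Cs^{-1}e^{-b'|x-y|^2/s}$ (equivalently, keep the factor $|x-y|/\sqrt{s}$ before bounding it by a constant), which yields the integrable singularity $s^{-1+1/(2m)}$. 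Second, with the sharp bounds the time singularity is integrable for \emph{every} Young exponent $m\in[1,\infty)$, i.e.\ even when $q=\rho$; so the strict inequality $q<\rho$ is not ``exactly what makes the kernel integrable in time.'' Its actual role is to force $m>1$ in the Young relation $1+1/\rho=1/m+1/q$, i.e.\ a genuine gain of integrability from $L^q$ to $L^\rho$, which is what the spatial-compactness step --- the step you correctly single out as the delicate one --- consumes. Neither point changes the verdict that your approach is the standard, correct one; they are the bookkeeping details on which, as you say, the argument rests.
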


%\begin{corollary}
%Lemma (\ref{E:Gyongy}) holds true if  $\partial_yG(r,t;x,y)$ in operator $J$ is replaced by $G(r,t;x,y)$
%\end{corollary}
%\noindent \textcolor{red}{\bf{Lemmas related to the stochastic integral to be written here.}}

\section{The Main Theorem}
We now announce the main theorem of this paper.

\begin{theorem}[Main Theorem]\label{LDP}

\noindent The processes $\{U^{\epsilon}(t): t\in[0,T]\}$ satisfy the uniform Laplace principle on $C\big([0,T]; L^2([0,1])\big)$ with rate function $I_{\eta}$ given by (\ref{ratefunction}).
\end{theorem}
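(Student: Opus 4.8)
The plan is to deduce the uniform Laplace principle directly from Theorem \ref{LDP_original}, so the entire proof reduces to verifying that the limiting solution map $\xi^0$ defined by (\ref{E:limiting_process}) satisfies conditions (A1) and (A2), together with the lower semicontinuity of $\eta \mapsto I_\eta(h)$. That $\xi^0$ is a well-defined measurable map is already guaranteed by Theorem \ref{uniqueness}, which gives existence and uniqueness of the solution $h = V^{0,v}_\eta$ to the deterministic equation (\ref{E:deterministic}) for every $\eta \in L^2([0,1])$ and $v \in L^2([0,T]\times[0,1])$. With this in hand, the rate function (\ref{ratefunction}) is exactly the one appearing in Theorem \ref{LDP_original}, and the claimed Laplace principle follows once the two conditions are checked.

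For condition (A1), I would fix $M<\infty$ and a compact set $K \subset L^2([0,1])$ and show that $\Gamma_{M,K} = \{\xi^0(\eta, \mathrm{Int}(v)) : v \in S^M,\ \eta \in K\}$ is compact in $C([0,T]; L^2([0,1]))$. Take a sequence $h_n = \xi^0(\eta_n, \mathrm{Int}(v_n))$ with $v_n \in S^M$ and $\eta_n \in K$. Since $S^M$ is weakly compact and $K$ is norm-compact, one may pass to a subsequence along which $v_n \rightharpoonup v$ weakly in $L^2$ and $\eta_n \to \eta$ strongly. The heat-kernel bounds (1)--(4) show that the three convolution terms in (\ref{E:limiting_process}) are equicontinuous in $t$ as $L^2([0,1])$-valued maps and uniformly bounded; together with the smoothing of $G$ this yields relative compactness of $\{h_n\}$ in $C([0,T]; L^2([0,1]))$. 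Passing to the limit in (\ref{E:deterministic}), using the weak convergence $v_n \rightharpoonup v$ tested against the strongly convergent kernel terms, identifies any limit point as $\xi^0(\eta, \mathrm{Int}(v))$, and uniqueness from Theorem \ref{uniqueness} then forces the entire sequence to converge, establishing compactness.

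For condition (A2), I would take $M<\infty$, families $\{v^\epsilon\} \subset \mathcal P^M_2$ and $\{\eta^\epsilon\}$ with $v^\epsilon \to v$ and $\eta^\epsilon \to \eta$ in distribution, and prove that $V^{\epsilon,v^\epsilon}_{\eta^\epsilon} \to V^{0,v}_\eta$ in distribution in $C([0,T]; L^2([0,1]))$. The controlled process solves (\ref{E:controlled_process}), whose well-posedness is Theorem \ref{existence_controlled}. The argument splits into tightness of the laws of $V^{\epsilon,v^\epsilon}_{\eta^\epsilon}$ and identification of the weak limit. For tightness, the stochastic term carries the prefactor $\sqrt\epsilon$ and, since $\sigma$ is bounded by (H3), it is negligible in probability as $\epsilon \to 0$; the remaining drift, forcing and control terms are controlled by Lemma \ref{Gyongy}, applied with the kernels $R = G$ and $R = \partial_y G$, using the growth bounds (H1)--(H2) to produce the required uniform $L^q$ bounds on the integrands. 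Via the Skorohod representation theorem I would then upgrade convergence in distribution to almost-sure convergence on a common probability space and pass to the limit term by term in (\ref{E:controlled_process}); the Lipschitz estimates of (H3) and the weak convergence of $v^\epsilon$ against the kernel terms identify the limit as the unique solution $V^{0,v}_\eta$ of (\ref{E:deterministic}).

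The main obstacle is the Burgers-type nonlinearity: the term $\int_0^t\int_0^1 \partial_y G_{t-s}(x,y)\, g(s,\cdot)(y)\, dy\, ds$ combines the more singular derivative kernel --- bounded in estimate (2) by $|t-s|^{-3/2}$ --- with the quadratic growth of $g_2$ permitted by (H2). Controlling this term, both for the equicontinuity in (A1) and the tightness in (A2), is where estimates (2) and (4) together with Lemma \ref{Gyongy} do the essential work, and care is needed to secure bounds uniform in $n$ (respectively in $\epsilon$) in spite of the quadratic nonlinearity. Finally, the lower semicontinuity of $\eta \mapsto I_\eta(h)$ follows from the continuity of $\xi^0$ in the initial datum established in the course of verifying (A1), which completes the checking of the hypotheses of Theorem \ref{LDP_original} and hence the proof.
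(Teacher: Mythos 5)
Your overall architecture coincides with the paper's: reduce Theorem \ref{LDP} to Theorem \ref{LDP_original} and verify (A1) and (A2). The one structural difference is in (A1): you argue compactness of $\Gamma_{M,K}$ directly (weak compactness of $S^M$, equicontinuity, identification of limit points via Theorem \ref{uniqueness}), whereas the paper proves a single convergence theorem (Theorem \ref{convergence}) for $V^{\beta(\epsilon),v^\epsilon}_{\eta^\epsilon}$ with a parameter function $\beta$, obtaining (A2) with $\beta(r)=r$ and (A1) with $\beta\equiv 0$, so that $\Gamma_{M,K}$ is the continuous image of the compact set $K\times S^M$. That difference is cosmetic; both are standard ways of running the weak-convergence method, and both hinge on the same estimates.

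The genuine gap is in your tightness step. You write that the drift, forcing and control terms are handled by Lemma \ref{Gyongy} ``using the growth bounds (H1)--(H2) to produce the required uniform $L^q$ bounds on the integrands.'' But (H1)--(H2) only give $|f(s,V(s))|_1 \leq K(1+|V(s)|_2)$ and $|g(s,V(s))|_1 \leq K(1+|V(s)|_2^2)$; to satisfy the hypothesis of Lemma \ref{Gyongy} you still need the a priori estimate that $\sup_{t\leq T}|V^{\epsilon,v^\epsilon}_{\eta^\epsilon}(t,\cdot)|_2$ is bounded in probability, uniformly in $\epsilon$ --- display (\ref{bounded_probability}) in the paper. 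This estimate is the technical core of the entire proof, and it does not follow from the hypotheses by a direct Gronwall iteration on the mild equation: the quadratic growth of $g_2$ combined with the $|t-s|^{-3/2}$ singularity of $\partial_y G$ defeats such an argument (which is precisely why Gy\"ongy's existence proof is itself a limiting procedure). The paper proves (\ref{bounded_probability}) by invoking that procedure: truncate $f,g$ to bounded, globally Lipschitz $f_n,g_n$ agreeing with $f,g$ for $|r|\leq n$; use the bound (\ref{bounded_probability_approximation}) for the truncated solutions, uniform in $\epsilon$; and, crucially, use that the truncated solutions converge to $V^{\epsilon,v^\epsilon}_{\eta^\epsilon}$ in probability in $C\big([0,T];L^2([0,1])\big)$ uniformly in $\epsilon$ as $n\to\infty$, closing with a triangle-inequality argument. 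You flag that ``care is needed'' for the quadratic nonlinearity, but you propose no mechanism for this uniform bound; note also that the same bound (for $\beta=0$, uniformly over $v\in S^M$) underlies the uniform boundedness and equicontinuity you assert in your (A1) argument. Without it, neither condition is actually verified.
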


\noindent In view of Theorem \ref{ratefunction}, it suffices to verify conditions (A1) and (A2).  Let $\beta:[0,1)\to[0,1)$ be a measurable map such that $\beta(r)\to\beta(0)=0$ as $r\to0$.

\subsection{Verification of Condition (A2)}  Condition (A2) follows by applying the following theorem with
$\beta(r)=r$, $r\in[0,1)$.

 \begin{theorem}[Convergence of the Controlled Process] \label{convergence}
 Let $M<\infty$, and suppose that $\eta^{\epsilon}\rightarrow\eta$ and $v^{\epsilon}\rightarrow v$ in distribution as $\epsilon\rightarrow 0$ with $\{v^{\epsilon}\} \subset {\mathcal P}_2^M$.  Then $V^{{\beta
 (\epsilon)}, v^{\epsilon}}_{\eta^{\epsilon}} \rightarrow V^{0,u}_{\eta}$ in distribution.
 \end{theorem}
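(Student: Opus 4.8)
The plan is to establish convergence in distribution by the standard two-step weak-convergence scheme: first prove that the family $\{V^{\beta(\epsilon),v^\epsilon}_{\eta^\epsilon}\}_{\epsilon>0}$ is tight in $C([0,T];L^2([0,1]))$, then identify every weak limit point as a solution of the limiting equation (\ref{E:deterministic}) and invoke the uniqueness statement of Theorem \ref{uniqueness} to upgrade subsequential convergence to convergence of the whole family to $V^{0,v}_\eta$. Throughout I would work term by term on the mild formulation (\ref{E:controlled_process}), treating separately the initial-condition term, the stochastic integral, the two drift terms carrying $f$ and $g$, and the control term carrying $\sigma\,v^\epsilon$.

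First I would derive a priori moment bounds, showing $\sup_{\epsilon}E\big[\sup_{t\le T}|V^{\beta(\epsilon),v^\epsilon}_{\eta^\epsilon}(t)|_p^p\big]<\infty$ for a suitable $p\ge 2$, uniformly in $\epsilon$; these follow from the heat-kernel estimates (1)--(4), the growth conditions (H1)--(H2), the boundedness of $\sigma$ in (H3), the uniform control bound $\int_0^T|v^\epsilon(s)|_2^2\,ds\le M$ for $v^\epsilon\in\mathcal P_2^M$, and a Gronwall argument. Because the solutions are then bounded in probability in the relevant $L^q$-norms, Lemma \ref{Gyongy} applies to the drift terms carrying $f$ (with $R=G$, linear growth from (H1)) and $g$ (with $R=\partial_yG$, quadratic growth from (H2)), yielding their uniform tightness in $C([0,T];L^2([0,1]))$. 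The stochastic integral carries the prefactor $\sqrt{\beta(\epsilon)}\to0$, and since $\sigma$ is bounded a moment/factorization estimate shows it tends to $0$ in probability in $C([0,T];L^2([0,1]))$. The initial-condition term converges since $\eta^\epsilon\to\eta$ and the heat semigroup acts continuously. The control term I would handle directly: by Cauchy--Schwarz in $(s,y)$, the boundedness of $\sigma$, the square-integrability of the Green kernel from estimate (1), and the uniform bound $\int_0^T|v^\epsilon(s)|_2^2\,ds\le M$, one obtains both uniform bounds and $t$-equicontinuity of this term, hence its tightness; this is where the compactness of $S^M$ enters.

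To identify the limit I would use the Skorokhod representation theorem to realize the joint convergence $(V^{\beta(\epsilon),v^\epsilon}_{\eta^\epsilon},v^\epsilon,\eta^\epsilon)\to(\bar V,v,\eta)$ almost surely on a common probability space, with $v^\epsilon\to v$ in the weak topology of $S^M$. The almost sure convergence $V^\epsilon\to\bar V$ in $C([0,T];L^2([0,1]))$ together with the Lipschitz properties (H3) gives $\sigma(s,V^\epsilon(s))\to\sigma(s,\bar V(s))$, $f(s,V^\epsilon(s))\to f(s,\bar V(s))$ and $g(s,V^\epsilon(s))\to g(s,\bar V(s))$ strongly in the appropriate $L^q$-norms, which lets me pass to the limit in the $f$- and $g$-integrals. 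For the control term I would split the difference as $\int G_{t-s}(x,y)[\sigma(s,V^\epsilon(s))-\sigma(s,\bar V(s))](y)\,v^\epsilon(s,y)\,dy\,ds$, which vanishes by the strong convergence of $\sigma(s,V^\epsilon(s))$ and the uniform $L^2$-bound on $v^\epsilon$, plus $\int G_{t-s}(x,y)\sigma(s,\bar V(s))(y)\,[v^\epsilon-v](s,y)\,dy\,ds$, which vanishes because $G_{t-s}(x,\cdot)\sigma(s,\bar V(s))(\cdot)$ is a fixed $L^2([0,t]\times[0,1])$ function tested against the weakly convergent $v^\epsilon\rightharpoonup v$. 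Hence $\bar V$ satisfies (\ref{E:deterministic}), so by Theorem \ref{uniqueness} we have $\bar V=V^{0,v}_\eta$; the limit is therefore deterministic given $(v,\eta)$ and independent of the subsequence, which yields convergence in distribution of the entire family.

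The main obstacle will be the control-term passage to the limit, since $v^\epsilon$ converges only weakly and is multiplied by $\sigma(s,V^\epsilon(s))$, which itself depends on the random solution: one must rigorously interchange the weak limit in $v^\epsilon$ with the strong limit of $\sigma(s,V^\epsilon(s))$ against the singular Green kernel, and the strong--weak splitting above is the mechanism I expect to need. A closely related difficulty is the $g$-term, which carries the derivative kernel $\partial_yG_{t-s}$ with its stronger singularity from estimate (2) while $g$ grows quadratically through $g_2$ in (H2); controlling this Burgers-type nonlinearity uniformly in $\epsilon$, both in the a priori bound and in the limit identification, is the technically delicate point where estimates (2) and (4) together with the decomposition in (H2) must be used carefully.
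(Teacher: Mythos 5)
Your overall architecture coincides with the paper's: decompose the mild form into the five terms (initial datum, stochastic integral, $g$-drift, $f$-drift, control term), prove tightness of each, extract a subsequence by Prohorov, use the Skorokhod representation to identify the limit term by term --- with exactly the strong--weak splitting you describe for the control term, which is the paper's treatment of $J_5^{\epsilon}$ --- and conclude via the uniqueness assertion of Theorem \ref{uniqueness}. Where you genuinely diverge is the a priori bound that feeds Lemma \ref{Gyongy}, and that is where your proposal has a gap. You claim uniform moment bounds $\sup_{\epsilon}E\big[\sup_{t\le T}|V^{\beta(\epsilon),v^\epsilon}_{\eta^\epsilon}(t)|_p^p\big]<\infty$ obtained from the heat-kernel estimates, the growth conditions and ``a Gronwall argument'' in the mild formulation. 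This cannot work as stated: the component $g_2$ of the drift grows quadratically, so estimating the mild equation termwise produces an inequality of the shape
\begin{equation*}
\phi(t)\ \le\ C\ +\ C\int_0^t (t-s)^{-\gamma}\,\phi(s)^2\,ds,\qquad \gamma\in(0,1),
\end{equation*}
and Gronwall's lemma does not close on a quadratic term; this is precisely the Burgers-type obstruction. Growth conditions and kernel bounds alone are insufficient --- one must exploit either the structural content of (H2) (namely that $g_2$ is independent of $x$, so that $\int_0^1 u\,\partial_x g_2(t,u)\,dx=0$ under Dirichlet boundary conditions, i.e., an energy/It\^o argument on $|u|_2^2$) or a localization, and your sketch invokes neither.

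The paper sidesteps moments entirely: all it needs is that $\sup_{t\le T}|V^{\beta(\epsilon),v^\epsilon}_{\eta^\epsilon}(t,\cdot)|_2$ be bounded \emph{in probability}, uniformly in $\epsilon$, and it gets this from Gy\"ongy's truncation scheme. One replaces $f,g$ by bounded, globally Lipschitz $f_n,g_n$ agreeing with $f,g$ for $|r|\le n$; by \cite[Proposition 4.7]{G} the truncated solutions $V^{\beta(\epsilon),v^\epsilon}_{\eta^\epsilon,n}$ are bounded in probability uniformly in $\epsilon$ for each fixed $n$, and they converge in probability to $V^{\beta(\epsilon),v^\epsilon}_{\eta^\epsilon}$ uniformly in $\epsilon$ as $n\to\infty$; a triangle-inequality argument, letting first $C\to\infty$ and then $n\to\infty$, yields (\ref{bounded_probability}). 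If you wish to keep your moment-bound route, you must carry out the energy argument that uses the $x$-independence of $g_2$; otherwise the truncation/bounded-in-probability argument should be imported. The remainder of your proposal --- tightness of the other terms, the vanishing of the stochastic term through the factor $\sqrt{\beta(\epsilon)}$, the strong--weak splitting of the control term against the weakly convergent $v^\epsilon$, and the uniqueness step identifying the deterministic limit --- matches the paper and is sound conditional on this bound.
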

 
\begin{proof}
 Note that 
\begin{align}
\label{E:controlled_process_perturbed}
V&^{\beta(\epsilon),v^\epsilon}_{\eta}(t,x)= \int_0^1 G_t(x,y)\eta^\epsilon(y)dy\nonumber\\&+ \sqrt{\beta(\epsilon)} \int_0^t\int_0^1 G_{t-s}(x,y) \sigma(s,V^{\beta(\epsilon),v^\epsilon}_{\eta^\epsilon} (s))(y)W(dy, ds)\nonumber\\&-\int_0^t\int_0^1\partial_yG_{t-s}(x,y)g(s, V^{\beta(\epsilon),v^\epsilon}_{\eta^\epsilon}(s))(y)dyds\nonumber\\&+\int_0^t\int_0^1G_{t-s}(x,y)f(s,V^{\beta(\epsilon),v^\epsilon}_{\eta^\epsilon} (s))(y)dyds\nonumber\\&+\int_0^t\int_0^1G_{t-s}(x,y)\sigma(s,V^{\beta(\epsilon),v^\epsilon}_{\eta^\epsilon} (s))(y)v^\epsilon(s,y)dyds\nonumber\\&\doteq J_1^{\epsilon}+J_2^{\epsilon}+J_3^{\epsilon}+J_4^{\epsilon}+J_5^{\epsilon}
\end{align}

\noindent We show tightness of  $J_i^{\epsilon}$  for $i = 1, 2, 3, 4, 5$ in $C\big([0,T]; L^2([0,1])\big)$, and therefore assert the claim.  Since $\eta^{\epsilon} \in L^2([0,1])$, the tightness of $J_1^{\epsilon}$ follows by the following lemma.

\begin{lemma}\label{continuity}  Let $\eta\in L^2([0,1])$.  Then $(t\rightarrow G_t\eta)$ belongs to $C\big([0,T]; L^2([0,1])\big)$, and 
\begin{align*}
\eta \rightarrow \{t\rightarrow  G_t\eta\},
\end{align*}

\noindent  is a continuous map in $\eta$.

\end{lemma}

\noindent  As for the tightness of $J_5^{\epsilon}$, we have
\begin{align}
\sup _{\epsilon\in (0,1)}J^{\epsilon}_5 \nonumber &\doteq \sup_{\epsilon\in(0,1)} \int_0^t\int_0^1 G_{t-s}(x,y)\sigma(s,V^{\beta(\epsilon),v^\epsilon}_{\eta^\epsilon} (s))(y)v^{\epsilon}(y,s) dyds \\&\leq  M\bigg(\int_0^t \int_0^1G_{t-s}^2(x,y) dyds \bigg)^{1/2}\sup_{\epsilon\in(0,1)}\bigg(\int_0^1\int_0^t (v^{\epsilon})^2 dy ds\bigg)^{1/2} \leq  C(T),
\end{align}

\noindent where H\"older's inequality, boundedness of $\sigma$, properties of the regularizing kernel, and boundedness of the controls in $L^2([0,T]\times[0,1])$ have been used.  This establishes the tightness of $J_5^{\epsilon}$.  As for the tightness of $J_4^{\epsilon}$, we mainly use Lemma \ref{Gyongy}. Note that $f$ satisfies the linear growth condition:

$$\sup_{t\in[0,T]}\sup_{x\in[0,1]}|f(t,x,r)|\leq K(1+|r|).$$\\
\noindent In Lemma \ref{Gyongy}, let $\rho=2$, $q=1$, and $\zeta^\epsilon(s,y) \doteq f(s,y, V^{\beta(\epsilon), v^\epsilon}_{\eta^\epsilon}(s,y))$.  We have

$$\sup_{t\in [0,T]}| f(s, V^{\beta(\epsilon), v^\epsilon}_{\eta^\epsilon}(s))|_1\leq K +K\sup_{t\in[0,T]} |V^{\epsilon, v^\epsilon}_{\eta^\epsilon}(s)|_2$$

\noindent  Let $\theta^\epsilon \doteq K +K\sup_{t\in[0,T]} |V^{\beta(\epsilon), v^\epsilon}_{\eta^\epsilon}(s)|_2$.  We have

\begin{align*}
\lim_{C\rightarrow\infty}\sup_\epsilon P(  K +K\sup_{t\in[0,T]} |V^{\beta(\epsilon), v^\epsilon}_{\eta^\epsilon}(s)|_2\geq C) & < \lim_{C\rightarrow\infty}\sup_\epsilon P(K\geq\frac{C}{2}) 
&\\+\lim_{C\rightarrow\infty}\sup_\epsilon P(\sup_{t\in[0,T]} |V^{\beta(\epsilon), v^\epsilon}_{\eta^\epsilon}(s)|_2\geq \frac{C}{2}) 
\end{align*}

\noindent Clearly the first term on the R.H.S. of the immediate above display in equal to zero.   As for the second term,  it suffices to show that  

$$
\sup_{t\leq T} |V^{\beta(\epsilon), v^{\epsilon}}_{\eta^{\epsilon}}(t,.)|_{2},
$$
 \noindent is bounded in probability, i.e.
 
\begin{equation}\label{bounded_probability}
 \lim_{C\rightarrow \infty}\sup_{{\epsilon}\in(0,1)}P\big( \sup_{t\leq T} |V^{\beta({\epsilon}), u^{\epsilon}}_{\xi^{\epsilon}}(t,.)|_{2} \geq C\big) = 0.
\end{equation}
 
\noindent The proof of (\ref{bounded_probability}) is similar to that in \cite{LS} but we include it here for the convenience of the reader.  Recall the class of stochastic semi-linear equations  (\ref{E:1}) which we rewrite here

\begin{align}
\label{approximation}
\frac{\partial u^{\epsilon}}{\partial t}(t,x) &=\nonumber \frac{\partial^2 u^{\epsilon}}{\partial x^2}(t,x) +\sqrt{\epsilon}\sigma(t, x, u^\epsilon(t,x))\frac{\partial^2W}{\partial t \partial x}(t,x) \\& +\frac{\partial}{\partial x}g(t,x,u^{\epsilon}(t,x))+ f(t,x,u^\epsilon(t,x)),
\end{align}

\noindent Note that the controlled equation (\ref{controlled}) can be recovered from the above equation.  In \cite{G}, Gy\"ongy (1998) proves the existence and uniqueness of the solutions to the above class of stochastic semi-linear equations, by an approximation procedure.  Let $f_n(t,x,r)$, and $ g_n(t,x,r)$ be sequences of bounded measurable functions such that they are globally Lipschitz in $r\in\mathbb R$, and $f_n \doteq f$, $g_n \doteq g$ for $|r|\leq n$, $f_n = g_n \doteq 0$ for $|r| \geq n+1$.  $f_n$, and $g_n$ satisfy the same growth conditions as $f$, and $g$. We have, by (\cite [Proposition 4.7] {G}), that there exists a unique solution, say $V^{\beta(\epsilon), v^{\epsilon}}_{\eta^{\epsilon},n}$, to the semi-linear equation (\ref{E:1}) with $f$ and $g$ replaced by $f_n$ and $g_n$.  That is,  $V^{\beta(\epsilon), v^{\epsilon}}_{\eta^{\epsilon},n}$, is the unique solution to the truncated equation.  Furthermore, $V^{\beta(\epsilon), v^{\epsilon}}_{\eta^{\epsilon},n}$ converges to $V^{\beta(\epsilon), v^{\epsilon}}_{\eta^{\epsilon}}$ in $C\big([0,T]; L^2([0,1])\big)$ in {{probability}}, and uniformly in $\epsilon$ as $n$ approaches infinity.  It has been demonstrated in \cite{G} that, for every $n\geq 1$

\begin{equation}\label{bounded_probability_approximation}
 \lim_{C\rightarrow \infty}\sup_{{\epsilon}\in(0,1)}P\big( \sup_{t\leq T} |V^{\beta({\epsilon}), v^{\epsilon}}_{{\eta^{\epsilon},n}}(t,.)|_{2} \geq C\big) = 0.
\end{equation}
 
\noindent Observe that

\begin{align}\label{ultimate}
\sup_{\epsilon \in (0,1)} &P \big(\sup_{t\leq T}|V^{\beta(\epsilon), v^{\epsilon}}_{\eta^{\epsilon}}|_{2}\geq C\big)\leq \sup_{\epsilon\in(0,1)}P\big(\sup_{t\leq T}|V^{\beta(\epsilon), v^{\epsilon}}_{\eta^{\epsilon}} -V^{\beta({\epsilon}), v^{\epsilon}}_{{\eta^{\epsilon},n}}|_{2}\nonumber\\& +\sup_{t\leq T}|V^{\beta({\epsilon}), v^{\epsilon}}_{{\eta^{\epsilon},n}}   |_{2} \geq C\big)\nonumber \nonumber\\& \leq \sup_{\epsilon\in(0,1)}P\big(\sup_{t\leq T}|V^{\beta(\epsilon), v^{\epsilon}}_{\eta^{\epsilon}} -V^{\beta({\epsilon}), v^{\epsilon}}_{{\eta^{\epsilon},n}}|_{2} \geq \frac{C}{2}\big) \\&+\sup_{\epsilon\in(0,1)} P\big(\sup_{t\leq T}|V^{\beta({\epsilon}), v^{\epsilon}}_{{\eta^{\epsilon},n}}   |_{2}\geq \frac{C}{2}\big).
\end{align}

\noindent By letting $C$ approach infinity, and exploiting the boundedness in probability of  $|V^{\beta({\epsilon}), v^{\epsilon}}_{\eta^{\epsilon},n}|_{2}$, we get

\begin{align*}
\lim_{C\rightarrow\infty}\sup_{\epsilon \in (0,1)}& P \big(\sup_{t\leq T}|V^{\beta(\epsilon), v^{\epsilon}}_{\eta^{\epsilon}}|_{2} \geq C\big)\\&\leq \lim_{C\rightarrow \infty}\sup_{\epsilon\in(0,1)}P\big(\sup_{t\leq T}|V^{\beta(\epsilon), v^{\epsilon}}_{\eta^{\epsilon}} -V^{\beta({\epsilon}), v^{\epsilon}}_{{\eta^{\epsilon},n}}|_{2} \geq \frac{C}{2}\big). 
\end{align*}

\noindent Now by letting $n$ tend to infinity, due the convergence in probability of $V^{\beta(\epsilon), v^{\epsilon}}_{\eta^{\epsilon},n}$  to $V^{\beta(\epsilon), v^{\epsilon}}_{\eta^{\epsilon}}$, we conclude that

\begin{equation}\label{tightness}
 \lim_{C\rightarrow \infty}\sup_{{\epsilon}\in(0,1)}P\big( \sup_{t\leq T} |V^{\beta({\epsilon}), u^{\epsilon}}_{\xi^{\epsilon}}(t,.)|_{2} \geq C\big) = 0.
\end{equation}
\noindent Therefore

$$\lim_{C\rightarrow\infty}\sup_\epsilon P(\theta ^\epsilon \geq C) = 0,$$

\noindent and the assumption of Lemma  \ref{Gyongy} is satisfied.  This establishes the tightness of $J_4^{\epsilon}$.  The proof of tightness for $J_3^{\epsilon}$ follows by the same analogy as $J_5^{\epsilon}$, and thus omitted.   Therefore, the tightness of $V^{\beta({\epsilon}), u^{\epsilon}}_{\xi^{\epsilon}}$ in  $C\big([0,T]; L^2([0,1])\big)$ is concluded.

Having the tightness of  $J_i^{\epsilon}$ for $i = 1, 2, 3, 4, 5$ at hand, by Prohorov's theorem, we can extract a subsequence along which each of the aforementioned processes and $V^{\beta({\epsilon}), v^{\epsilon}}_{\eta^{\epsilon}}$ converge in distribution to $J_i^0$ and $V^{0,v}_{\eta}(t,x)$ in $C\big([0,T]; L^2([0,1])\big)$.  We aim to show that the respective limits are as follows:

\noindent \begin{align}
&\nonumber J_1^0  = \int_0^1G_t(s,y)\xi(y) dy,\\&\nonumber J_2^0 = 0,
\\&\nonumber
J_3^0 = -\int_0^t\int_0^1 \partial_yG_{t-s}(x,y) g(s,V^{{0}, v}_{\eta}(s))(y)  dy ds,
\\&\nonumber 
J_4^0 = \int_0^t\int_0^1 G_{t-s}(x,y) f(s,V^{{0}, v}_{\eta}(s))(y)  dy ds,
\\&\nonumber 
J_5^0 = \int_0^t\int_0^1 G_{t-s}(x,y) \sigma(s,V^{{0}, v}_{\eta}(s))(y) v(s,y) dy ds.
\end{align}

\noindent The case $i=1$ follows from lemma (\ref{continuity}).  The case $i=2$ follows from Lemma 3 in \cite{BDM}.  Note that convergence in probability in $C([0,T] \times [0,1])$ implies the same in $C\big([0,T]; L^2 ([0,1])\big)$.  As for $i=3$, we invoke the Skorokhod  Representation Theorem \cite{DK}, and thus assume almost sure convergence on a larger, common probability space. Denote the RHS of $J_3^0$ by $\bar J_3^0$.  We have

\begin{align}\label{g_convergence}
&|J_3^{\epsilon} - \bar J_3^0| \nonumber \leq \int_0^t\int_0^1 |\partial_y G_{t-s}|(|1+|V^{\beta(\epsilon), v^\epsilon}_{{\eta}^\epsilon}|+|V^{0, v}_{\eta}|) |V^{\beta(\epsilon), v^\epsilon}_{{\eta}^\epsilon}-V^{0, v}_{\eta}|dy ds \\&\nonumber\leq (\sup_{x,t}|V^{\beta(\epsilon), v^\epsilon}_{{\eta}^\epsilon}-V^{0, v}_{\eta}| ) \bigg(T (\sup_t  |V^{\beta(\epsilon), v^\epsilon}_{{\eta}^\epsilon}|_2 +\sup_t|V^{{0}, v}_{\eta}|_2)\big(\int_0^t\int_0^1 |\partial_y G_{t-s}|^2\big)^{1/2}
\\&+\int_0^t\int_0^1 |\partial_y G_{t-s}|dy ds\bigg)  ,
\end{align}

\noindent where the Lipschitz property of $g$  with linearly growing constant, H\"older's inequality, and the properties of the regularizing kernel have been used. The right-hand-side of (\ref{g_convergence}) thus converges to zero as $\epsilon \rightarrow 0$ since $V^{\beta(\epsilon), v^\epsilon}_{{\eta}^\epsilon}\to V^{0, v}_{\eta}$, and 

%$$
%\int_0^t\int_0^1 |G_{t-s}|dy ds \leq C(T),
%$$

$$
\int_0^t\int_0^1 |\partial_yG_{t-s}|^2 dy ds \leq C(T),
$$
\noindent By the fact that the limit is unique, and that $\bar J_3^0$ is a continuous random field (by Theorem \ref{uniqueness}) we conclude that $J_3^{0} = \bar J_3^0$.  The case $i=4$ follows by the same exact analogy as the third case.  For $i=5$, we invoke the Skorokhod  Representation Theorem \cite{DK} again.  Denote the right-hand-side of $J_5^0$ by $\bar J_5^0$.  We have
\begin{align}\label{Eq:control_convergence}
|J_5^{\epsilon} - \bar J_5^0| &\nonumber\leq \int_0^t\int_0^1 |G_{t-s}| |\sigma(s,V^{\beta({\epsilon}), v^\epsilon}_{{\eta}^\epsilon}(s))(y)-\sigma(s,V^{{0}, v}_{\eta}(s))(y)| |v^\epsilon(s,y)|\\& +\int_0^t\int_0^1 |G_{t-s}| \sigma(s,V^{{0}, v}_{\eta}(s))(y) |v^\epsilon(s,y) - v(s,y)|dsdy
\end{align} 
\noindent The first term on the right-hand-side of (\ref{Eq:control_convergence}) can be bounded above by
\begin{align} \label{cauchy-schwartz}
M &\nonumber \big[ \int_0^t\int_0^1 |G_{t-s}|^2 |\sigma(s,V^{\beta({\epsilon}), v^\epsilon}_{\eta^\epsilon}(s))(y)-\sigma(s,V^{{0}, v}_{\eta}(s))(y)|^2 dy ds \big]^{1/2}\\& \leq C(T) (\sup_{x,t}| V^{\beta({\epsilon}), v^\epsilon}_{{\eta}^\epsilon} -V^{{0}, v}_{\eta}|)
\end{align}

\noindent where the Cauchy-Schwartz inequality, the properties of the regularizing kernel and control, and the Lipschitz property of $\sigma$ have been used.  The first term on the RHS of (\ref{Eq:control_convergence}) thus converges to zero, since $V^{\beta({\epsilon}), v^\epsilon}_{{\eta}^\epsilon}\to V^{{0}, v}_{\eta}$ as $\epsilon \to 0$ .  The second term on the RHS of (\ref{Eq:control_convergence}) also converges to zero  as $\epsilon \rightarrow 0$, since ${v^{{\epsilon}}\to v}$, and 

$$
\int_0^t\int_0^1 |G_{t-s}|  \sigma(s,V^{{0}, v}_{\eta}(s))(y)dy ds <\infty.
$$

\noindent   Again, by the fact that the limit is unique, and that $\bar J_5^0$ is a continuous random field (by Theorem \ref{uniqueness}) we conclude that $J_5^{0} = \bar J_5^0$. Thus, we have proven that along a subsequence, the controlled process converges to the limiting equation. 
 \end{proof}

 \subsection{Verification of Condition (A1)}  Condition (A1) follows by Theorem \ref{uniqueness}, and applying Theorem \ref{convergence} with $\beta=0$.  This concludes the proof of Theorem \ref{LDP}.

%\noindent{\bf{Acknowledgement.}}  The author acknowledges a departmental post-doctoral fellowship from the Department of Mathematics at the University of Southern California.

\bibliographystyle{amsplain}

\end{document}